\DeclareMathOperator{\RE}{Re}
\DeclareMathOperator{\tr}{tr}
\DeclareMathOperator{\Tr}{Tr}
\newcommand{\ceqv}{\overset{cyc}{\sim}}
\newcommand{\deqv}{ \overset{d}{\sim} }
\newcommand{\A}{\mathcal{A}}
\newcommand{\F}{\mathcal{F}}
\newcommand{\Fg}{\mathbb{F}}
\newcommand{\Real}{\mathbb{R}}
\newcommand{\Complex}{\mathbb{C}}
\newcommand{\abs}[1]{\left\vert#1\right\vert}
\newcommand{\set}[1]{\left\{#1\right\}}
\newcommand{\seq}[1]{\left<#1\right>}
\newcommand{\norm}[1]{\left\Vert#1\right\Vert}
\newtheorem{theorem}{Theorem}
\newtheorem*{theorem1}{Theorem}
\newtheorem{lemma}[theorem]{Lemma}
\newtheorem{proposition}[theorem]{Proposition}
\newtheorem{corollary}[theorem]{Corollary}
\newtheorem{definition}[theorem]{Definition}
\newtheorem{remark}[theorem]{Remark}
\begin{document}
\title{Algebraic reformulation of Connes
embedding problem and the free group algebra.}
\author{Kate Juschenko, Stanislav Popovych}
\date{}
\maketitle

 \footnotetext{ 2000 {\it Mathematics Subject
Classification}: 46L07, 46K50 (Primary) 16S15, 46L09, 16W10
(Secondary) }

\begin{abstract}
We give a modification of I. Klep and  M. Schweighofer algebraic
reformulation of Connes' embedding problem by considering
$*$-algebra of the countably generated free group. This allows to
consider only quadratic polynomials in unitary generators instead of
arbitrary polynomials in self-adjoint generators.

\medskip\par\noindent
KEYWORDS:  Connes' Embedding Problem, $II_1$-factor, sum of
hermitian squares, positivity.
\end{abstract}

\section{Introduction.}

Let $\omega \in \beta(\mathbb{N})\setminus \mathbb{N}$ be a free
ultrafilter on $\mathbb{N}$ and $R$ be the hyperfinite II$_1$-factor
with faithful tracial normal state $\tau$. Then the subset
$I_\omega$ in $l^\infty(\mathbb{N}, R)$ consisting of
$(x_1,x_2,\ldots)$ with $\lim_{n\to\omega} \tau(x^*_n x_n) =0$ is a
closed ideal in $l^\infty(\mathbb{N}, R)$ and a quotient algebra
$R^\omega =l^\infty(\mathbb{N}, R)/ I_\omega $ is a von Neumann
II$_1$-factor called {\it ultrapower} of $R$. It is naturally
endowed with a faithful tracial normal state
$$\tau_\omega((x_n)+I_\omega) = \lim_{n\to\omega} \tau(x_n).$$

A. Connes' embedding problem asks whether  every finite von Neumann
algebra with fixed normal faithful tracial state can be embedded
into $R_\omega$ in a trace-preserving way.

It is well know that Connes' embedding problem is equivalent to the
problem whether every finite set $x_1, \ldots , x_n$ of self-adjoint
contractions in arbitrary II$_1$-factor $(M, \tau)$ has {\it
matricial microstates}, i.e  whether for any $\varepsilon>0$  and
$t\ge 1$ there is $k\in \mathbb{N}$ and  self-adjoint contractive
$k\times k$-matrices $A_1, \ldots, A_n$ such that $|\tr(w(x_1,
\ldots, x_n))- \tau(w(A_1, \ldots, A_n))|<\varepsilon$ for all words
$w$ of length at most $t$.

In \cite{Hadwin} D. Hadwin proved that solving  Connes' embedding
problem in affirmative is equivalent to proving that  there is no
polynomial $p(x_1,...,x_n)$ in non-commutative variables such that
\begin{enumerate}
\item $tr_k(p(A_1,\ldots,A_n))\geq 0$ for every $k$
and self-adjoint contractions \\ $A_1,...,A_n\in
M_k$.
\item $\tau(p(T_1,\ldots,T_n))<0$, where $T_1,\ldots,T_n$ are
self-adjoint contractive elements in a finite factor with trace
$\tau$.
\end{enumerate}

Recently I. Klep and  M. Schweighofer established that
 Connes' embedding problem has the following equivalent
 algebraic reformulation.

 Let $f(X_1,\ldots, X_m)$ be a self-adjoint
  element in a free associative algebra $\mathbb{K}\langle \overline{X}\rangle$
  with countable family of self-adjoint generators
 $\overline{X}=\set{X_1,X_2,\ldots}$, where $\mathbb{K}=\mathbb{R}$ or $K=\mathbb{C}$. If
 $\tr (f(A_1, \ldots, A_m))\ge 0$ for any $n$ and family of self-adjoint
 contractive matrices $A_1, \ldots, A_m \in M_n(\mathbb{K})$ then $f$ has
 the property that for every $\varepsilon
 >0$ we have
$\varepsilon e +f = g+ c$ where $c$ is a sum of commutators in
$\mathbb{K}\langle \overline{X}\rangle$, $g$ belongs to quadratic
module generated by $1-X_i^2$ and $e$ is the unit in
$\mathbb{K}\langle \overline{X}\rangle$. Recall that a {\it
quadratic module} is the smallest subset   of $\mathbb{K}\langle
\overline{X}\rangle$ containing unit, closed under addition and
conjugation $x\to g^* x g$ by arbitrary $g\in \mathbb{K}\langle
\overline{X}\rangle$.

In the present paper we consider the group $*$-algebra $\F$  of the
countably generated free group $\Fg_\infty=\langle u_1, u_2, \ldots
\rangle$ instead of $\mathbb{K}\langle \overline{X}\rangle$. One
reason is that we can use a more standard and well known  set of
hermitian squares $\set{g^*g| g\in \F}$ instead of quadratic module
$M$ and the second that we can  bound  the degree of polynomials $f$
in the above reformulation by 2. This modification provides the
following.

\begin{theorem1}
Connes' embedding conjecture is true iff for any self-adjoint
$f\in\F$ of the form $f(u_1,\ldots, u_n)= \alpha e + \sum_{i\not= j
}\alpha_{ij} u_i^* u_j$ condition \begin{equation} \label{trp}
          Tr (f(V_1,\ldots, V_n))\ge 0
          \end{equation}
for every $m\ge 1$ and every $n$-tuple of
    unitary   matrices $V_1, \ldots, V_n \in U(m)$ implies that for every $\varepsilon>0$,
    $\varepsilon e + f= g+c$ where  $c$ is a sum of commutators and  $g$ is a
    sum of Hermitian squares.
\end{theorem1}
We will call $f$ satisfying \eqref{trp}  a {\it trace-positive quadratic polynomial}.
Elements of the form $g+c$ with $c$ being a sum of commutators are called
{\it cyclically equivalent} to $g$ (see Section \ref{secalg}).

In Section \ref{seccov} we study a  subset of correlation matrices of
the form $ [\tr(U_i^* U_j)]_{ij}$ where $U_1, \ldots, U_n$ runs over
$n$-tuple of unitary matrices and $\tr(U)$ denotes normalized trace
of $U$. Using Clifford algebra methods we show that this set
contains all correlation matrices with real coefficients. This
implies that all trace-positive quadratic polynomials $f$ with real
coefficients do satisfy the property from the above theorem.

The description of the set $ \set{[\tr(U_i^* U_j)]_{ij} \mid  U_1,
\ldots, U_n \in U_m(\Complex), m\ge 1}$ seems to be unknown even for
$n= 3$. In this case it is equivalent to the problem of description
of the set of triples $(\tr(U), \tr(V), \tr(U V))$ where $U$ and $V$
are unitary matrices. Note that the  lists of possible eigenvalues
of $U$, $V$ and  $U V$ can be described by generalization of Horn's
inequalities (see \cite{Fulton}) but little is know about possible
traces $(\tr(U), \tr(V), \tr(U V))$. The only known connection
between these traces seems to be the inequality $\sqrt{1-\abs{\tr(U
V)}^2} \le \sqrt{1-\abs{\tr(U)}^2} +
 \sqrt{1-\abs{\tr(V)}^2}$ established in \cite{wang}.

\bigskip

\noindent {\bf Acknowledgements}

 The authors are indebted to Igor Klep and Markus Schweighofer for careful reading of the paper and a
number of suggestions which helped to improve the paper a lot.

\section{An algebraic reformulation of Connes' problem.}\label{secalg}

Let $\F$ be the $*$-algebra of the countably generated free group
$\Fg_\infty$. Let $K$ denote the $\mathbb{R}$-subspace in $\F_{sa}$
generated by the commutators $fg-gf$ ($f, g\in F$). We will say that
$f$ and $g$ in $\F$ are {\it cyclically equivalent} (denote $f\ceqv
g$) if $f-g\in K$. Let $\Sigma^2(\F)$ denote the set of positive
elements of the $*$-algebra $\F$, i.e. elements of the form
$\sum_{j=1}^m f_j^* f_j$ with $f_j\in \F$. An element of the form
$f^*f$ is called Hermitian square and therefore the  cone
$\Sigma^2(\F)$ is called the cone of Hermitian squares.

\begin{definition}
Let $C$ be a subset of the vector space $V$. An element $v\in C$ is called an
algebraic interior point of $C$ if for every $u\in V$ there is $\varepsilon >0$ in
$\mathbb{R}$ s.t. $v+ \lambda u \in C$ for all $0\le \lambda \le \varepsilon$.
\end{definition}

\begin{definition}
Let $A$ be a unital $*$-algebra with the unit $e$. Then

\noindent 1.  An element  $a \in {\A}_{sa}$ is called  bounded if
there is $\alpha \in \mathbb{R}_+$ such that $\alpha e \pm a \in
\Sigma^2(\A)$.\\ \noindent 2. An element  $x=a+ib$ with $a,\ b \in
{\A}_{sa}$ is bounded if
 the elements $a$ and  $b$ are such.

\noindent 3.   The algebra ${\A}$ is bounded if all its elements are bounded.

\end{definition}

It is well known that the set of all bounded elements in  $\A$ is a
$*$-subalgebra in $\A$ and that  an element  $x\in {A}$ is bounded
if and only if  $xx^*$ is such (see for example~\cite{pop1, popjush}). In
particular $\F$ is a bounded $*$-algebra. Obviously this implies
that the unit of the algebra is an algebraic interior  point of
$\Sigma^2(\F)$.

The following lemma is a modification of Theorem 3.12
in~\cite{Klep}.

\begin{lemma}\label{cycl}
Let $f\in \F$ be self-adjoint. If for any II$_1$ factor $M$ with
faithful normal tracial state $\tau$ and separable predual and every
$n$-tuple of unitary elements $U_1, \ldots, U_n$ in the unitary
group $\mathcal{U}(M)$ of $M$ we have that
$$\tau(f(U_1,\ldots, U_n))\ge 0$$ then for every $\varepsilon > 0$,
$\varepsilon e +f \sim g$ for
some $g\in \Sigma^2(\F)$.
\end{lemma}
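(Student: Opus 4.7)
The plan is to argue by contradiction using algebraic Hahn--Banach separation and the GNS construction. Set $C := \Sigma^2(\F) + K \subseteq \F_{sa}$; this is a convex cone, and since the unit $e$ is an algebraic interior point of $\Sigma^2(\F)$ (as noted just before the lemma), it remains an algebraic interior point of $C$. The desired conclusion is precisely that $\varepsilon e + f \in C$ for every $\varepsilon > 0$, so I fix $\varepsilon > 0$ and assume for contradiction that $\varepsilon e + f \notin C$.

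Applying Eidelheit's algebraic separation theorem to $C$ and the point $\varepsilon e + f$ yields a nonzero $\mathbb{R}$-linear functional $L : \F_{sa} \to \mathbb{R}$ with $L \geq 0$ on $C$ and $L(\varepsilon e + f) \leq 0$. A short interior-point argument (if $L(e)=0$ then the interiority of $e$ in $C$ forces $L\equiv 0$) shows $L(e) > 0$, so I rescale to $L(e) = 1$. Complexifying $L$ to $\F$ via the decomposition into self-adjoint and skew-adjoint parts, I obtain a $\mathbb{C}$-linear functional that is positive (since $L \geq 0$ on $\Sigma^2(\F) \subseteq C$) and tracial (since $L$ vanishes on $K$); that is, a tracial state on $\F$ with $L(f) \leq -\varepsilon$.

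Next, perform the GNS construction for $(\F, L)$ to obtain a separable Hilbert space $H$ (separable because $\F$ is countable-dimensional), a unital $*$-representation $\pi : \F \to B(H)$, and a cyclic vector $\xi$ with $L(a) = \langle \pi(a)\xi, \xi \rangle$. Each $\pi(u_i)$ is a genuine unitary on $H$ (because $u_i^*u_i=u_iu_i^*=e$ in $\F$), and every $\pi(a)$ is bounded as a finite complex combination of unitaries. By the standard theory of GNS representations of tracial states, $\xi$ is also separating and the vector state $\omega_\xi$ extends to a faithful normal tracial state $\tau$ on the finite von Neumann algebra $M := \pi(\F)''$, with $\tau(f(\pi(u_1), \dots, \pi(u_n))) = L(f) \leq -\varepsilon$.

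To reach a contradiction I must realise this negative value of the trace inside an honest $II_1$ factor. I use the central decomposition $M = \int^{\oplus} M_t \, d\mu(t)$ over a standard Borel measure space; each fibre $M_t$ is either a $II_1$ factor with separable predual or a finite-dimensional factor $M_{n(t)}(\mathbb{C})$, the trace disintegrates as $\tau = \int^{\oplus} \tau_t \, d\mu(t)$, and the unitaries $\pi(u_i)$ become measurable fields of unitaries $(\pi(u_i))_t \in M_t$. On a $II_1$ fibre I apply the hypothesis directly; on a matrix-algebra fibre I first embed $M_{n(t)}(\mathbb{C})$ trace-preservingly into the hyperfinite $II_1$ factor $R$ (via $x\mapsto x\otimes 1\otimes 1\otimes\cdots$) and then apply the hypothesis in $R$. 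Either way $\tau_t(f(\pi(u_1)_t, \dots, \pi(u_n)_t)) \geq 0$ for $\mu$-a.e.\ $t$, so integrating gives $L(f) \geq 0$, contradicting $L(f) \leq -\varepsilon$. The main obstacle is this final reduction: putting together the central decomposition, the measurable disintegration of the trace and of the fields of unitaries, and the reduction of the matrix-algebra fibres to $R$, is the delicate step, and it is exactly where the separable-predual hypothesis in the lemma is used.
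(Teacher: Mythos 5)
Your argument is correct, and it agrees with the paper's proof through the separation and GNS stages: both form the cone $\Sigma^2(\F)+K$, use the algebraic interiority of $e$ and Eidelheit separation to produce a tracial state $L$ on $\F$ with $L(f)<0$ (your explicit check that $L(e)>0$ is a detail the paper glosses over), and both pass to the GNS representation with cyclic trace vector $\xi$, using that elements of $\F$ are finite combinations of group unitaries to obtain bounded operators. Where you genuinely diverge is the reduction to factors. The paper pushes $L$ to a tracial state on the enveloping $C^*$-algebra $C^*(\F)$, invokes Banach--Alaoglu and Krein--Milman to replace $L$ by an extreme tracial state still satisfying $L(f)<0$, and uses the fact that extreme traces generate factor representations; the resulting finite factor is then either trace-preservingly embeddable into a II$_1$ factor or already of type II$_1$, and the hypothesis contradicts $L(f)<0$. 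You instead keep the original $L$, form $M=\pi(\F)''$ with its faithful normal trace, and disintegrate $M$ over its centre into finite factors, applying the hypothesis fibrewise (embedding the matrix-algebra fibres into $R$ first) and integrating. Both routes are standard and correct: the paper's yields a single factor at the cost of the extreme-point machinery and the characterization of extreme traces, while yours avoids that characterization at the cost of direct-integral theory (measurable fields of factors, disintegration of the trace and of the unitaries), which, as you note, is where separability is genuinely used. Your treatment of the finite type I fibres by tensoring into $R$ is in fact cleaner than the paper's parenthetical claim that a type I factor arising here must be $\Complex$.
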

\begin{proof}
Clearly $\Sigma^2(\F)+K$ is a convex cone in $\mathbb{R}$-space $\F_{sa}$.
Since $e$  is an an algebraic internal point of
 $\Sigma^2(\F)$ it is also an algebraic internal point of $\Sigma^2(\F)+K$.

Assume that  there is $\varepsilon > 0$ such that  $\varepsilon e +f
\not\sim g$ for any  $g\in \Sigma^2(\F)$, i.e. $\varepsilon e +f\not
\in \Sigma^2(\F)+K$. By Eidelheit-Kakutani separation theorem there
is $\mathbb{R}$-linear unital functional $L_0:\F_{sa}\to \mathbb{R}$
s.t. $L_0(\Sigma^2(\F)+K) \subseteq \mathbb{R}_{\ge0}$ and
$L_0(\varepsilon e +f)\in \mathbb{R}_{\le 0}$. Since $-K \subset
\Sigma^2(\F)+K$ we have that $L_0(K)=0$. In particular extending
$L_0$ to $\mathbb{C}$-linear functional on $\F$ we get a tracial
functional $L$. Since $L$ maps $\Sigma^2(\F)$ into the non-negative
reals it defines a pre-Hilbert space structure on $\F$ by means of
sesquilinear for $\langle p, q \rangle = L(q^*p)$, $p, q \in \F$.
Let $N= \{p: \langle p, p \rangle=0\}$. By  Cauchy-Schwarz
inequality $N=  \{p: L(q^* p) =0 \text{ for all } q\in \F\}$ and
hence is a left ideal. Let
 $H_0$ be the pre-Hilbert space $\F/N$. Consider the left regular representation $\pi: \F \to L(H_0)$.
 Since $\pi$ is a $*$-homomorphism for every $f\in \F$ operator $\pi(f)$ is bounded as a linear combination of unitary
 operators. Thus $\pi(f)$ can be extended to the bounded operator acting  on the Hilbert space  $H$ which is the completion of $H_0$. Thus we have a representation  $\pi:\F\to B(H)$ with a cyclic
 vector $\xi = e+N$ and such that $L(p) = \langle \pi(p)\xi, \xi \rangle$. In particular
 $L$ is a contractive tracial state on $\F$ and thus defines a tracial state
 of the universal enveloping $C^*$-algebra $C^*(\F)$. By Banach-Alaoglu and Krein-Milman theorem we can
 assume that $L$ is an extreme point in the set of all tracial states
 and thus $\pi(\F)$ generates a factor von Neumann algebra $M$ (see~\cite{Hadwin}).
 Clearly $M$ is a finite factor. If it is type $I$ then it should be $\mathbb{C}$ (since
 $\xi$ is a trace vector) and thus can be embedded into any II$_1$-factor in trace
 preserving way. Thus we can assume that $M$ is a type II$_1$-factor. But then
 condition  $L(f)<0$ is impossible.
\end{proof}

\begin{corollary}\label{realcor}
If self-adjoint $f\in \F$ has real coefficients and
 for any real type II$_1$ von Neumann algebra $(M,\tau)$ with normal faithful tracial state $\tau$
and every $n$-tuple of unitary elements $U_1, \ldots, U_n$ in $M$
we have that
$$\tau(f(U_1,\ldots, U_n))\ge 0$$ then the same holds for the complex II$_1$ von Neumann algebras.
\end{corollary}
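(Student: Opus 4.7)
The plan is to reduce the corollary to an $\mathbb{R}$-coefficient analogue of Lemma \ref{cycl}, applied to the real group $*$-algebra $\F_{\mathbb{R}}$ of $\mathbb{F}_\infty$ (same group, real scalars). The desired intermediate statement is: if $f\in\F_{\mathbb{R}}$ is self-adjoint and satisfies the hypothesis of the corollary, then for every $\varepsilon>0$ there is $g\in\Sigma^2(\F_{\mathbb{R}})$ with $\varepsilon e+f\ceqv g$ in $\F_{\mathbb{R}}$. Given this, the corollary follows at once: since $\Sigma^2(\F_{\mathbb{R}})\subseteq\Sigma^2(\F)$ and real commutators are complex commutators, the decomposition $\varepsilon e+f=g+c$ survives in $\F$, so evaluating at unitaries $V_1,\ldots,V_n$ of an arbitrary complex II$_1$ von Neumann algebra $(N,\tau_N)$ gives $\tau_N(\varepsilon e+f(V_1,\ldots,V_n))=\tau_N(g)\ge 0$, and $\varepsilon\to 0$ yields the conclusion.

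To establish the intermediate statement, I would rerun the argument of Lemma \ref{cycl} in the real setting. Replace $K,\Sigma^2(\F),\F_{sa}$ by the $\mathbb{R}$-counterparts $K_{\mathbb{R}}\subseteq(\F_{\mathbb{R}})_{sa}$ and $\Sigma^2(\F_{\mathbb{R}})$. The algebra $\F_{\mathbb{R}}$ is still bounded, since the generators satisfy $e-u_i^*u_i=0\in\Sigma^2(\F_{\mathbb{R}})$, so $e$ remains an algebraic interior point of $\Sigma^2(\F_{\mathbb{R}})+K_{\mathbb{R}}$. If the claim failed, Eidelheit-Kakutani applied in the real vector space $(\F_{\mathbb{R}})_{sa}$ supplies an $\mathbb{R}$-linear tracial functional $L:\F_{\mathbb{R}}\to\mathbb{R}$ with $L(e)=1$, $L(\Sigma^2(\F_{\mathbb{R}}))\subseteq\mathbb{R}_{\ge 0}$, and $L(\varepsilon e+f)\le 0$.

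Next I would carry out a real GNS construction: $\langle p,q\rangle=L(q^*p)$ defines an $\mathbb{R}$-valued pre-inner product on $\F_{\mathbb{R}}$, and the left regular representation extends to a $*$-homomorphism $\pi:\F_{\mathbb{R}}\to B(H)$ on the real Hilbert-space completion $H$, with cyclic trace vector $\xi=e+N$. Then $M:=\pi(\F_{\mathbb{R}})''$ is a finite real von Neumann algebra carrying the faithful normal tracial state $\tau_M(\cdot)=\langle\cdot\,\xi,\xi\rangle$. Choosing $L$ to be extreme among tracial states of the real universal $C^*$-algebra of $\F_{\mathbb{R}}$ (via Banach-Alaoglu and Krein-Milman), $M$ is a real factor. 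Paralleling Lemma \ref{cycl}, either $M=\mathbb{R}\cdot e$ (so $L$ factors through the trivial trace-preserving embedding into any real II$_1$ factor) or $M$ is a real type II$_1$ factor. In either case the hypothesis yields $L(\varepsilon e+f)=\tau_M(\varepsilon e+f(\pi(u_1),\ldots,\pi(u_n)))\ge\varepsilon>0$, contradicting $L(\varepsilon e+f)\le 0$.

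The main obstacle is the real GNS/factor-type step: one must verify that the real von Neumann algebra produced lies within the scope of the hypothesis, i.e.\ is either trivially $\mathbb{R}\cdot e$ or a real type II$_1$ factor, even though real factors admit a finer type classification than complex ones. In particular, one must argue that finiteness of $M$ together with the existence of a cyclic trace vector rules out higher-dimensional real type I factors (or, if they do appear, that they embed trace-preservingly into a real II$_1$ factor so the hypothesis still applies). The remaining ingredients, namely extending $L_0$ from $(\F_{\mathbb{R}})_{sa}$ to $\F_{\mathbb{R}}$, extremality, and boundedness of $\pi(f)$, are routine transcriptions of the corresponding steps in Lemma \ref{cycl}.
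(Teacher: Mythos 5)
Your strategy is sound in outline, but it is a far longer route than the paper's, and it is in fact the paper's \emph{other} proof: your intermediate statement is precisely Lemma~\ref{reallemma}, and the paper remarks right after that lemma that it ``gives another proof of Corollary~\ref{realcor}''. The paper's actual proof of the corollary is a two-line direct argument needing none of the separation/GNS machinery: write $f=\alpha e+\sum_j\alpha_{w_j}(w_j+w_j^*)$ with $\alpha_{w_j}\in\Real$ (possible since $f$ is self-adjoint with real coefficients), observe that for a complex trace $\tau$ one has $\tau(f)=\alpha+2\sum_j\alpha_{w_j}\RE\tau(w_j)=(\RE\tau)(f)$, and note that a complex II$_1$ von Neumann algebra $(M,\tau)$, regarded as a real von Neumann algebra with the faithful trace $\RE\tau$, is a real finite (type II$_1$) algebra with the same unitary group, so the real hypothesis applies verbatim. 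That approach buys simplicity and sidesteps exactly the points you flag as obstacles; your approach buys the stronger conclusion of Lemma~\ref{reallemma} (a sum-of-real-hermitian-squares certificate), which is more than the corollary asks for.

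If you do insist on your route, the obstacles you name are genuine and are not yet discharged. Boundedness of $\Real[\Fg_\infty]$ does not follow from ``$e-u_i^*u_i=0$'' alone, since over $\Real$ one cannot split a general element into self-adjoint real and imaginary parts; the paper cites Vidav for this. (For the interior-point claim it actually suffices to note that every self-adjoint element of $\Real[\Fg_\infty]$ is a real combination of $e$ and elements $w+w^*$, and $2e\pm(w+w^*)=(e\pm w)^*(e\pm w)$.) The real GNS/factor step also needs real work: finite real factors include $M_n(\Real)$, $M_n(\Complex)$ and $M_n(\mathbb{H})$, so you must either show each embeds trace-preservingly into a real type II$_1$ algebra (they do, e.g.\ by tensoring with the real hyperfinite II$_1$ factor) or drop the factor decomposition entirely and embed the whole finite real von Neumann algebra this way. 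None of this is fatal, but none of it is needed for the corollary as stated.
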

\begin{proof}
Element $f$ can be written as $f = \alpha +  \sum_{w_j} \alpha_{w_j} (w_j + w_j^*)$ with
$\alpha_{w_j}\in \Real$ and for
complex trace $\tau$ and $U_1, \ldots, U_n \in U(M)$ we will have
$\tau(f) = \alpha +  2 \sum_{w_j} \alpha_{w_j} \RE  \tau(w_j)$, i.e. $\tau(f) = (\RE \tau)(f)$.
 To finish the proof note that $M$ can be regarded as a real finite von Neumann algebra with faithful trace
 $\RE \tau$.
\end{proof}

\begin{lemma}\label{reallemma}
If $f\in \Real [ \Fg_\infty ] $, $f=f^*$ and for every real type
II$_1$ von Neumann algebra $(M, \tau)$ we have that $\tau(f)\ge 0$
then for every $\varepsilon>0$, $\varepsilon+ f\ceqv g$ for some $g
\in \set{\sum_{j=1}^{m} g_j^* g_j \mid m\in \mathbb{N}, g_j\in \Real
\langle \Fg_\infty\rangle}$.
\end{lemma}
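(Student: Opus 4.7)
The plan is to carry out the proof of Lemma \ref{cycl} verbatim, but working over $\Real$ instead of $\Complex$. Let $\Sigma^2_\Real$ denote the set of sums $\sum_j g_j^* g_j$ with $g_j\in \Real[\Fg_\infty]$, and let $K_\Real$ be the $\Real$-span of self-adjoint commutators in $\Real[\Fg_\infty]$. Form the cone $C := \Sigma^2_\Real + K_\Real$ in $\Real[\Fg_\infty]_{sa}$. Since $u_i^* u_i = e$ and all the boundedness identities that establish $\F$ as a bounded $*$-algebra (e.g.\ $2e-(u_i+u_i^*) = (u_i-e)^*(u_i-e)$) have real coefficients, the unit $e$ remains an algebraic interior point of $C$.

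Assume for contradiction $\eps e + f \notin C$ for some $\eps>0$. Eidelheit--Kakutani yields a unital $\Real$-linear functional $L_0:\Real[\Fg_\infty]_{sa}\to\Real$ nonnegative on $C$ with $L_0(\eps e + f)\le 0$. Because $-K_\Real\subseteq C$, $L_0$ annihilates $K_\Real$, and extending via $L(a) := L_0\bigl(\tfrac{a+a^*}{2}\bigr)$ gives an $\Real$-linear tracial functional on $\Real[\Fg_\infty]$ (the trace property $L(pq) = L(qp)$ follows directly from $L_0(K_\Real) = 0$). The form $\langle p,q\rangle := L(q^* p)$ is then symmetric and positive semidefinite over $\Real$; its radical $N$ is a left ideal, and completing the real pre-Hilbert space $\Real[\Fg_\infty]/N$ produces a real Hilbert space $H$ on which the left regular representation $\pi:\Real[\Fg_\infty]\to B(H)$ is bounded (each $u_i$ acts as a real unitary, and an arbitrary element is a linear combination of unitaries). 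With cyclic vector $\xi = e+N$ we obtain $L(p) = \langle \pi(p)\xi,\xi\rangle$.

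Applying Banach--Alaoglu and Krein--Milman exactly as in Lemma \ref{cycl}, we may replace $L$ by an extreme tracial state of the real enveloping $C^*$-algebra, so that $M := \pi(\Real[\Fg_\infty])''$ is a finite real von Neumann factor. If $M$ is of type $I$ it is forced to equal $\Real$ (since $\xi$ is a trace vector), which embeds trace-preservingly into any real II$_1$ factor; otherwise $M$ itself is a real II$_1$ factor. In either case the hypothesis of the lemma gives $L(f)\ge 0$, contradicting $L(\eps e+f)\le 0$. The main obstacle is verifying that the extreme-point/factor step transfers to the real category, namely that an extremal tracial state on a real unital $C^*$-algebra gives a factorial GNS representation. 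This can be handled either by complexifying (extremality is preserved by passing to the complexified $C^*$-algebra, reducing to the complex result used in Lemma \ref{cycl}) or by a direct double-commutant computation in $B_\Real(H)$.
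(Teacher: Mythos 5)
Your proposal is correct and is essentially the paper's own proof: the authors likewise dispose of the lemma by running the argument of Lemma \ref{cycl} over $\Real$, noting that the only nontrivial point is that $e$ is an algebraic interior point of the real cone, which they settle by citing Vidav's result that $\Real\langle\Fg_\infty\rangle$ is a bounded $*$-algebra (you instead exhibit the real-coefficient boundedness identities directly, which amounts to the same thing). The extra care you take with the extreme-point/factoriality step in the real category is a reasonable elaboration of what the paper leaves as an ``obvious modification.''
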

\begin{proof}
The  proof of this statement can be obtained by obvious modification
of the proof of lemma~\ref{cycl}. The only nontrivial part is that the unit $e$ is an algebraic internal point but this is equivalent to
$\Real \langle \Fg_\infty\rangle$ being bounded $*$-algebra. The proof of the last fact can be
found in~\cite{vidav}.
\end{proof}
This lemma gives another proof of corollary~\ref{realcor}. In sequel
we will need  the following lemma.
\begin{lemma}\label{matrarb}
If $(M, \tau)$ is a $II_1$ factor which can be embedded  into
$R^\omega$ and $f\in \F$ is self-adjoint then the condition
$\tr(f(V_1, \ldots, V_n))\ge 0$ for all $m\ge 0$ and all unitary
$V_1, \ldots, V_n$ in $M_{m\times m}(\mathbb{C})$ implies that
$\tau(f(U_1, \ldots, U_n))\ge 0$ for all unitary $U_1, \ldots, U_n$
in $M$.
\end{lemma}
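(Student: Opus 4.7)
The plan is to transfer the trace inequality from $M$ to matrix algebras through the hyperfinite II$_1$ factor $R$ and its ultrapower. Since $M$ embeds in $R^\omega$ in a trace-preserving way, I fix such an embedding $\iota\colon M\to R^\omega$, so that for any element of the group $*$-algebra $\F$ we have $\tau(f(U_1,\ldots,U_n)) = \tau_\omega(f(\iota(U_1),\ldots,\iota(U_n)))$. It therefore suffices to prove the inequality when $(M,\tau)=(R^\omega,\tau_\omega)$.

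Next I would lift each unitary $\iota(U_i)\in R^\omega$ to a representing sequence $(u_k^{(i)})_{k\in\mathbb{N}}$ of \emph{unitaries} in $R$. This is a standard ultrapower fact for finite von Neumann algebras: every unitary in $R^\omega$ admits a unitary lift, obtained from an arbitrary bounded lift by polar decomposition and a continuous functional-calculus correction on each coordinate. With such lifts in hand, the definition of $\tau_\omega$ gives
\[
\tau_\omega(f(\iota(U_1),\ldots,\iota(U_n)))=\lim_{k\to\omega}\tau_R\bigl(f(u_k^{(1)},\ldots,u_k^{(n)})\bigr).
\]

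The second ingredient is hyperfiniteness: $R$ is the SOT-closure of an increasing union of matrix subalgebras $M_{2^r}(\Complex)$, with the normalized matrix trace on each block agreeing with $\tau_R$. For each $k$ and each $i$, I first approximate $u_k^{(i)}$ in $\|\cdot\|_2$ by an element of some $M_{m_k}(\Complex)\subset R$, and then, by a Kaplansky-density-type polar decomposition and functional-calculus argument, replace it by a genuine unitary $V_k^{(i)}\in M_{m_k}(\Complex)$ with $\|u_k^{(i)}-V_k^{(i)}\|_2<1/k$. Because $f$ is a fixed linear combination of finitely many words and multiplication of uniformly bounded elements is jointly $\|\cdot\|_2$-continuous, the standard telescoping estimate yields
\[
\bigl|\tau_R(f(u_k^{(1)},\ldots,u_k^{(n)}))-\tr(f(V_k^{(1)},\ldots,V_k^{(n)}))\bigr|\longrightarrow 0.
\]
By hypothesis $\tr(f(V_k^{(1)},\ldots,V_k^{(n)}))\ge 0$ for every $k$, so taking the limit along $\omega$ gives the desired inequality.

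The main obstacle I expect is the bookkeeping in the two approximation steps: simultaneously arranging a genuinely unitary representing sequence in $R$ (as opposed to a merely bounded one) and then approximating each of the $n$ unitaries of $R$ by a unitary in a common finite-dimensional matrix subalgebra, keeping all $\|\cdot\|_2$ errors small enough that the trace of the fixed noncommutative $*$-polynomial $f$ is perturbed by less than any prescribed $\varepsilon$. Both facts are classical in the theory of hyperfinite factors and their ultrapowers, but they must be invoked with care; once they are, the conclusion is immediate from continuity of $\tau_R$.
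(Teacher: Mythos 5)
Your proof is correct and follows essentially the same route as the paper: trace-preserving embedding into $R^\omega$, lifting to unitary representing sequences, and approximation of moments. In fact you are more explicit than the paper at one point, since the paper's proof jumps directly from unitaries in $R$ to ``unitary matrices'' while you spell out the intermediate hyperfiniteness/Kaplansky-density approximation of unitaries in $R$ by unitaries in finite-dimensional subalgebras.
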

\begin{proof}
Considering $M$  as a subalgebra in $R^\omega$ and $\tau$ as a
restriction of the trace on $R^\omega$ we can find a representing
sequences $\set{u^{(k)}_j}_{j=1}^\infty$ for $U_k$,  $k=1,\ldots, n$
in $l^\infty(\mathbb{N}, R)$ which are unitary elements in von
Neumann algebra $l^\infty(\mathbb{N}, R)$. This can be done since
every unitary in von Neumann algebra $R^\omega$ can be lifted to a
unitary in von Neumann algebra $l^\infty(\mathbb{N}, R)$ with
respect to canonical morphism $\pi:l^\infty(\mathbb{N}, R) \to
R^\omega$.  Taking $j$ sufficiently large we can approximate mixed
moments of $U_1, \ldots, U_k$  up to order $m$, i.e. $\tau(U_{s_1}
\ldots U_{s_t})$ with $t\le m$ and $s_1, \ldots, s_t \in
\set{1,\ldots, n}$, by the mixed moments of unitary matrices
$u^{(k)}_1, \ldots, u^{(k)}_n$.
\end{proof}

The following theorem is Proposition 4.6 in \cite{Kirchberg}
\begin{theorem}{\bf (E. Kirchberg)}
Let ($M$, $\tau$)  be von Neumann algebra with separable predual and
faithful normal tracial state $\tau$. If for all $n\ge 1$ and for
all unitaries  $u_1, \ldots, u_n$ in $M$ and for arbitrary
$\varepsilon
>0$ there exists $m\ge 1$ and unitary $m\times m$ matrices $V_1,
\ldots, V_n \in U(m)$ s.t. for all $i, j$:
\begin{eqnarray}
|\tau(u_i^* u_j) -\frac{1}{m} \Tr(V_i^* V_j)|&<&\varepsilon, \label{c1}\\
|\tau( u_j) -\frac{1}{m} \Tr(V_j)|&<&\varepsilon\label{c2}
\end{eqnarray}
then $M$ can be embedded into $R^\omega$.
\end{theorem}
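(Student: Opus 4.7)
The plan is to verify the matricial microstate characterization of Connes embedding for $M$: that for any $u_1,\ldots,u_n\in U(M)$, any $K\in\mathbb{N}$, and any $\delta>0$, there exist matrices $V_1,\ldots,V_n\in U(m)$ with $|\tau(w(u_1,\ldots,u_n))-m^{-1}\Tr(w(V_1,\ldots,V_n))|<\delta$ for all words $w$ of length $\le K$. By separability of the predual, it suffices to do this for a countable generating family $(u_k)_{k\ge 1}$ of unitaries in $M$.

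\medskip
\noindent\textbf{Enlarged approximation and ultrapower.} The key is to exploit the arbitrariness of $n$ in the hypothesis. For each $K$, enlarge the input list to the finite set $\mathcal{W}_K\subset U(M)$ of elements of $M$ represented by reduced words of length $\le K$ in $u_1^{\pm 1},\ldots,u_K^{\pm 1}$. Applying the hypothesis with tolerance $1/K$ yields matrix unitaries $W_s^{(K)}\in U(m_K)$ satisfying
\[
|\tau(s^*t)-m_K^{-1}\Tr((W_s^{(K)})^* W_t^{(K)})|<1/K, \qquad |\tau(s)-m_K^{-1}\Tr(W_s^{(K)})|<1/K
\]
for all $s,t\in\mathcal{W}_K$. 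As $s^*t$ ranges over $s,t\in\mathcal{W}_K$ one sweeps every product of length $\le 2K$ in the generators, so every $\tau$-moment of length $\le 2K$ is already approximated, albeit only as an $L^2$-inner product of matrices rather than as a matrix evaluation $w(V_1,\ldots,V_K)$. Identify each $M_{m_K}(\Complex)$ trace-preservingly inside $R$, assemble the diagonal sequences $(W_s^{(K)})_K$ in $l^\infty(\mathbb{N},R)$, and pass to $R^\omega$, producing unitaries $\bar W_s\in R^\omega$ with $\tau_\omega(\bar W_s)=\tau(s)$ and $\tau_\omega(\bar W_s^*\bar W_t)=\tau(s^*t)$ for all words $s,t$.

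\medskip
\noindent\textbf{Main obstacle: multiplicativity.} To obtain a $*$-homomorphism $\phi\colon M\to R^\omega$ extending $u_k\mapsto\bar W_{u_k}$, the outstanding step is $\bar W_{st}=\bar W_s\bar W_t$ in $R^\omega$. Equivalently,
\[
2-2\,\RE\,\tau_\omega(\bar W_{st}^*\bar W_s\bar W_t)=0,
\]
a length-$3$ matrix-moment identity not directly given by the length-$\le 2$ hypothesis. The proposed resolution compares $\bar W_{st}$ and $\bar W_s\bar W_t$ via their inner products against every auxiliary word $\bar W_r$: the identity $\tau_\omega(\bar W_r^*\bar W_{st})=\tau(r^*st)$ is immediate from the construction applied to $\mathcal{W}_{2K}$, while $\tau_\omega(\bar W_r^*\bar W_s\bar W_t)=\tau(r^*st)$ is established by enlarging the list at each stage to contain intermediate products (such as $r^*s$ and $st^{-1}$) and using cyclicity of the trace to rewrite the length-$3$ matrix moment as a length-$\le 2$ comparison against an appropriate element of $\mathcal{W}_{3K}$. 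Since $\{\bar W_r\}_r$ is $L^2$-cyclic for $\tau_\omega$ on the algebra it generates, orthogonality of $\bar W_{st}-\bar W_s\bar W_t$ to every $\bar W_r$ forces equality. Once multiplicativity is verified, $\phi$ extends uniquely to a trace-preserving (hence injective by faithfulness of $\tau$) $*$-homomorphism $M\hookrightarrow R^\omega$. The hardest part is this bootstrap from length-$\le 2$ to length-$3$ control, which carefully exploits the arbitrariness of $n$ in the hypothesis.
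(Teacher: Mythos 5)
The paper does not prove this statement at all: it is imported verbatim as Proposition~4.6 of the cited Kirchberg paper and used as a black box, so what you are attempting is a from-scratch proof of a deep external theorem. Your setup (reduction to microstates, enlarging the family to $\mathcal{W}_K$, assembling the matrices into $R^\omega$ to get unitaries $\bar W_s$ with the correct degree-$2$ joint moments) is fine, and you correctly isolate the crux: multiplicativity, i.e.\ $\bar W_{st}=\bar W_s\bar W_t$, equivalently $\RE\,\tau_\omega(\bar W_{st}^*\bar W_s\bar W_t)=1$. But your resolution of that crux does not work, and in fact no argument of the kind you describe can work. The quantity $\Tr(W_{st}^*W_sW_t)$ is a product of \emph{three} matrices from your family; cyclic permutation leaves it a product of three matrices, and adjoining more words to the list only produces new matrices $W_{r^*s}$ together with degree-$2$ data about them --- it never identifies $W_{r^*s}$ with the product $W_r^*W_s$, which is precisely what is in question. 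The reduction ``to a length-$\le 2$ comparison against an element of $\mathcal{W}_{3K}$'' is therefore circular. The closing step is also invalid: cyclicity of the GNS vector for the \emph{algebra} generated by $\{\bar W_r\}$ does not make the linear span of $\{\bar W_r\}$ dense in $L^2$, so orthogonality of $\bar W_{st}-\bar W_s\bar W_t$ to every $\bar W_r$ (which anyway you have not established --- it is equivalent to the degree-$3$ identity) is not something you can invoke independently.

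That the gap is unbridgeable by bookkeeping is shown by a concrete example. Take $M=L(\Fg_K)$ with its canonical trace, so that for distinct reduced words $s\neq t$ in $\mathcal{W}_K$ one has $\tau(s^*t)=0$ and $\tau(s)=\delta_{s,e}$. Then the hypothesis data for the family $\mathcal{W}_K$ says only that the $W_s$ form an approximately orthonormal family of unitaries with approximately vanishing trace. Independent Haar-random unitaries $W_s\in U(m)$, chosen independently for \emph{each} word $s$ (including $s=st$), satisfy this for large $m$ with high probability, yet $\tr(W_{st}^*W_sW_t)\approx 0$ rather than $1$. So degree-$2$ moment matching for an arbitrary family of words, even after any enlargement of the list, simply does not imply the degree-$3$ identity for the matrices handed to you by the hypothesis; Kirchberg's theorem asserts the \emph{existence} of multiplicative-up-to-$\varepsilon$ models, and its proof must construct or select them by a genuinely different mechanism (Kirchberg's argument goes through operator-algebraic machinery --- liftings and perturbation results for almost-multiplicative data --- not through trace identities alone). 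As written, your proof establishes only that $M$ admits a family $\{\bar W_s\}\subset R^\omega$ with the prescribed degree-$2$ moments, which is strictly weaker than an embedding.
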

\begin{remark}
We may drop condition \eqref{c2} since we may take $u_0 =1, u_1,
\ldots, u_n$ and by \eqref{c1} find matrices $W_0, \ldots, W_n$ such
that $|\tau(u_i^* u_j) - \frac{1}{m} \Tr(W_i^* W_j)|<\varepsilon$
for all $i$ and $j$. Thus~\eqref{c1} and \eqref{c2} will be
satisfied if we take $V_j = W_0^*W_j$.
\end{remark}
The proof of the following theorem is an adaptation of the proof of
Proposition 3.17 from \cite{Klep}.
\begin{theorem}\label{embeds}
Let $(M, \tau)$ be $II_1$-factor with separable predual. If for every self-adjoint element $f\in \F$
  of the form  $f= \alpha + \sum_{i\not= j } \alpha_{ij}u_i^* u_j$ the condition
  $$ \Tr (f(V_1,\ldots, V_n))\ge 0 $$ for all $m\ge 1$ and every $n$-tuple of
    unitary   matrices $V_1, \ldots, V_n \in U(m)$ implies that $\tau(f(U_1, \ldots, U_n))\ge 0$
    for all unitaries $U_1, \ldots, U_n$ in $M$ then $M$ can be embedded into $R^\omega$.
\end{theorem}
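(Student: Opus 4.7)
The plan is to invoke Kirchberg's theorem together with the remark following it, which reduces the task to showing that for every $n\ge 1$, every tuple of unitaries $u_1,\ldots,u_n\in M$, and every $\varepsilon>0$, there exist $m\ge 1$ and unitary matrices $V_1,\ldots,V_n\in U(m)$ with $|\tau(u_i^* u_j)-\tr(V_i^* V_j)|<\varepsilon$ for all $i,j$. Writing $A=[\tau(u_i^* u_j)]_{ij}\in M_n(\Complex)_{sa}$ and
\[ C_n=\set{[\tr(V_i^* V_j)]_{ij}\mid m\ge 1,\ V_1,\ldots,V_n\in U(m)}\subset M_n(\Complex)_{sa}, \]
this is equivalent to the assertion that $A$ lies in the closure $\overline{C_n}$.

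The first preparatory step is to establish convexity: given $C=[\tr_m(V_i^* V_j)]\in C_n$ and $C'=[\tr_{m'}(W_i^* W_j)]\in C_n$, the direct sums $V_i\oplus W_i\in U(m+m')$ produce $\tfrac{m}{m+m'}C+\tfrac{m'}{m+m'}C'\in C_n$; iterating this yields all rational convex combinations, and passing to the closure gives convexity of $\overline{C_n}$.

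The core of the proof is then a Hahn--Banach separation. Assume, aiming at a contradiction, that $A\notin\overline{C_n}$. Since $M_n(\Complex)_{sa}$ is a finite-dimensional real vector space and $\overline{C_n}$ is closed and convex, the Eidelheit--Kakutani theorem supplies a Hermitian $B=[\beta_{ij}]$ and a real constant $c$ with $\Tr(BC)\ge c$ for every $C\in\overline{C_n}$ while $\Tr(BA)<c$. Setting $\alpha_{ij}=\beta_{ji}$ for $i\ne j$ and $\alpha=\sum_i\beta_{ii}-c$, the element
\[ f=\alpha+\sum_{i\ne j}\alpha_{ij}\,u_i^* u_j \in \F \]
is self-adjoint (because $B=B^*$) and of the required form. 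Using $\tr(V_i^* V_i)=1=\tau(u_i^* u_i)$, a direct computation shows that $\tr(f(V_1,\ldots,V_n))=\Tr(BC)-c\ge 0$ for every $V_1,\ldots,V_n\in U(m)$ and every $m$, whereas $\tau(f(u_1,\ldots,u_n))=\Tr(BA)-c<0$. This contradicts the hypothesis of the theorem, so $A\in\overline{C_n}$, and Kirchberg's criterion then delivers the embedding.

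The main obstacle I expect is ensuring that the separating functional can be encoded as an $f$ of the specifically restricted shape $\alpha+\sum_{i\ne j}\alpha_{ij}u_i^* u_j$, with no other monomials allowed. What makes this work is precisely that the diagonal entries of both $A$ and every $C\in C_n$ are pinned to $1$, so the diagonal contribution of $B$ can be absorbed into the single scalar $\alpha$; if the hypothesis had only allowed an even more restricted form (e.g.\ $\sum_{i<j}$ without the real part structure), this translation step would fail. The secondary technical point is the convexity of $C_n$, which the direct-sum trick handles cleanly.
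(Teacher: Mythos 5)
Your proof is correct and follows essentially the same route as the paper's: Kirchberg's criterion (using the remark that drops condition \eqref{c2}), convexity of the set of trace data via block-diagonal direct sums realizing rational convex combinations, and a finite-dimensional Hahn--Banach separation whose separating functional is rewritten as a self-adjoint quadratic polynomial $f$ contradicting the hypothesis. The only (cosmetic, and arguably slightly cleaner) difference is that you separate the matrix $A$ from the \emph{closed} convex set $\overline{C_n}$ of Hermitian matrices, whereas the paper separates the functional $L$ from the convex hull $C$ inside $W^*$ and recovers $f\in W^{**}=W$ directly, symmetrizing via $f+f^*$ afterwards.
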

\begin{proof}
Take  $n\ge 1$. Consider the finite dimensional vector space
 $W= \{ \alpha e + \sum_{i\not= j } \alpha_{ij} u_i^* u_j |
\alpha_{ij}\in \mathbb{C}\}$. Denote by $C$ the convex hull  of the
set $F$ of the functionals $T\in W^*$ of the form $T(p) =
\frac{1}{m} \Tr (p(V_1, \ldots, V_n))$ where $m\ge 1$ and $V_1,
\ldots, V_n \in U(m)$. Take arbitrary  $n$-tuple of unitary elements
$U_1, \ldots, U_n$ in $M$ and put $L(p) = \tau(p(U_1, \ldots, U_n))$
for $p\in W$. Assume that $L\not\in C$. By Hahn-Banach theorem there
is $f\in W^{**}= W$ and $c\in \mathbb{R}$ s.t. $Re(L(f))<c<
Re(T(f))$ for all $T\in C$. Since $e \in W$ we can substitute $f-c$
instead of $f$ and thus assume that $c=0$. Since $T(f^*) =
\overline{T(f)}$ for every $T\in C$ and $L(f^*)=\overline{L(f)}$ we
have that $L(f+f^*)= 2 Re (L(f))<0< 2 Re(T(f))= T(f+f^*)$ which is a
contradiction. Thus $L\in C$. Let $T$ be a rational convex
combination of elements $T_1, \ldots, T_s$ from $F$ and $T_k$
corresponds to $n$-tuples $V_{j,k}$. Then $T =\frac{1}{q}(p_1
T_1+\ldots +p_s T_s)$ for some positive integers $p_1, \ldots, p_s,
q$. Taking block-diagonal $V_j = (V_{j,1}^{\otimes p_1}\oplus \ldots
\oplus V_{j,s}^{\otimes p_s})$ we see that $T\in F$. Thus each
element of $C$, in particular element $L$ can be approximated by
elements of $F$. By the Kirchberg's Theorem we have that $M$ can be
embedded into $R^{\omega}$.

\end{proof}

\begin{theorem}
Connes' embedding conjecture problem has affirmative solution iff
for any self-adjoint $f\in\F$ of the form $f= \alpha e +
\sum_{i\not= j }\alpha_{ij} u_i^* u_j$ condition $$ \Tr
(f(V_1,\ldots, V_n))\ge 0 $$ for every $m\ge 1$ and every $n$-tuple
of
    unitary   matrices $V_1, \ldots, V_n \in U(m)$ implies that for every $\varepsilon>0$,
    $\varepsilon e + f\sim g$ with $g\in \Sigma^2(\F)$.
\end{theorem}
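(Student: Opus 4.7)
The plan is to prove the equivalence directly from the supporting lemmas already established in the paper: Lemma~\ref{cycl} and Lemma~\ref{matrarb} handle one direction, while Theorem~\ref{embeds} handles the other. The role of Lemma~\ref{matrarb} is precisely to bridge the gap between matricial trace-positivity (testing on finite matrices) and tracial positivity on a $II_1$-factor $M$ once we know $M \hookrightarrow R^\omega$.

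For the forward implication, I would assume Connes' embedding conjecture and take a self-adjoint $f$ of the prescribed quadratic form satisfying \eqref{trp}. Let $(M,\tau)$ be any $II_1$-factor with separable predual. By Connes' embedding conjecture $M \hookrightarrow R^\omega$ trace-preservingly, so Lemma~\ref{matrarb} applies and yields $\tau(f(U_1,\ldots,U_n)) \ge 0$ for every $n$-tuple of unitaries $U_1,\ldots,U_n \in M$. Lemma~\ref{cycl} then delivers the desired conclusion: for every $\varepsilon>0$ there is $g \in \Sigma^2(\F)$ with $\varepsilon e + f \ceqv g$.

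For the reverse implication, I would start with an arbitrary $II_1$-factor $(M,\tau)$ with separable predual and verify the hypothesis of Theorem~\ref{embeds}. So let $f = \alpha e + \sum_{i\ne j}\alpha_{ij} u_i^* u_j$ be self-adjoint with $\Tr(f(V_1,\ldots,V_n)) \ge 0$ for all unitary matrices $V_1,\ldots,V_n$. By assumption, for every $\varepsilon > 0$ we may write $\varepsilon e + f = g + c$ where $g \in \Sigma^2(\F)$ and $c \in K$ is a sum of commutators. Fix unitaries $U_1,\ldots,U_n \in M$ and apply $\tau$ after substitution: the commutator part vanishes because $\tau$ is tracial, so $\tau(c(U_1,\ldots,U_n)) = 0$, and $\tau(g(U_1,\ldots,U_n)) \ge 0$ since $\tau$ is a state and $g$ is a sum of hermitian squares. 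Thus $\tau(f(U_1,\ldots,U_n)) \ge -\varepsilon$ for every $\varepsilon > 0$, hence $\tau(f(U_1,\ldots,U_n)) \ge 0$. Theorem~\ref{embeds} now gives $M \hookrightarrow R^\omega$, and since $M$ was arbitrary, Connes' conjecture holds.

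Both directions are essentially routine once the supporting results are in place; the only points requiring care are verifying that cyclic equivalence is preserved under evaluation at unitaries in a tracial von Neumann algebra (immediate, since $\tau$ kills commutators), and that the quadratic shape of $f$ is preserved throughout, so that Theorem~\ref{embeds} can be invoked without having to enlarge the class of test polynomials. There is no serious obstacle: the entire content of the theorem lies in the already-proved Kirchberg-style reduction that allows restriction to quadratic polynomials in unitaries.
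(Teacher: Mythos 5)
Your proof is correct and follows essentially the same route as the paper's: the forward direction chains Lemma~\ref{matrarb} (matricial positivity passes to any embeddable $II_1$-factor) with Lemma~\ref{cycl}, and the reverse direction verifies the hypothesis of Theorem~\ref{embeds} by evaluating $\varepsilon e + f = g + c$ at unitaries and letting $\varepsilon \to 0$. The only difference is that you spell out the step the paper dismisses as ``clearly,'' which is a harmless (indeed helpful) elaboration.
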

\begin{proof}  If Connes' embedding problem has affirmative solution
and quadratic $f\in\F_{sa}$ is such that $ \Tr (f(V_1,\ldots,
V_n))\ge 0 $ for every $m\ge 1$ and every $n$-tuple of
    unitary   matrices $V_1, \ldots, V_n \in U(m)$ then by lemma \ref{matrarb} we have  $ \tau (f(U_1,\ldots, U_n))\ge 0 $ for any unitary $U_1, \ldots, U_n$ in $M$. Hence by lemma \ref{cycl}, $\varepsilon e + f$ is cyclically equivalent to a sum of Hermitian squares. This proves that the conditions of the theorem are necessary.

If  $\varepsilon e + f$ is cyclically equivalent to an element in  $\Sigma^2(\F)$ for every $\varepsilon>0$ then clearly $ \tau (f(U_1,\ldots, U_n))\ge 0 $ for any unitary $U_1, \ldots, U_n$ in $M$. Hence the sufficiency of the theorem conditions follows from Theorem \ref{embeds}.

\end{proof}

\section{The trace-positive quadratic polynomials.}\label{seccov}

The results of the preceding section motivate the study of
trace-positive self-adjoint quadratic polynomials $f= \alpha e+
\sum_{i\not= j }\alpha_{ij} u_i^* u_j$ in unitary generators $u_1,
\ldots, u_n$, i.e. polynomials having the property that $ \Tr
(f(V_1,\ldots, V_n))\ge 0 $ for every $m\ge 1$ and every $n$-tuple
of unitary matrices $V_1, \ldots, V_n \in U(m)$. If $A$ denotes the
matrix
$$\left(
                                                                            \begin{array}{cccc}
                                                                              \alpha/n & \alpha_{12} & \ldots &  \alpha_{1n} \\
                                                                              \overline{\alpha_{12}} & \alpha/n & \ldots  & \alpha_{2n} \\
                                                                               \ldots &  \ldots&  \ldots &  \ldots \\
                                                                              \overline{\alpha_{1n}}& \overline{\alpha_{2n}} & \ldots  & \alpha/n \\
                                                                            \end{array}
                                                                          \right)
$$
 then $\Tr f(U_1, \ldots, U_n)\ge 0 $ can be expressed
 as  positivity of the sum of all entries of the Schur product $A\circ
 X$  where $X =[\tr (U_i^*U_j)]_{ij}$.

Thus the trace-positive polynomials   $f$ can be characterized as
those for which the sum of all entries of  $A\circ X$ for all $X\in
K_{n}:= \{ [\tr(U_i^*U_j)]_{ij} \mid m \ge 1, U_1, \ldots, U_n \in
U(m) \}$. Thus our primary objective is to describe the sets $K_n
\subseteq M_n(\mathbb{C})$. Note that in the case $A$ is positive
semidefinite we have $f\in \Sigma^2(\F)$. Indeed in this case $A$ is
a sum of rank one positive semidefinite matrices $A =\sum_{s}
(\beta_{s,1},\ldots, \beta_{sn})^T(\beta_{s,1},\ldots, \beta_{sn})$
and hence $f= \sum_{s} (\sum_j \beta_{s,j} u_j)^*(\sum_j \beta_{s,j}
u_j)$. We will also be interested in real analog of the sets $K_n$,
i.e. the sets $K_n(\Real) = K_n \cap M_n(\Real)$. Note that the sets
of the traces of monomials of unitary operators and their asymptotic
properties in the context of Connes' embedding problem also studied
in \cite{radulescu} and \cite{radulescu1}.

A self-adjoint matrix $A$ such that $f = (u_1^{-1}, \ldots,
u_n^{-1}) A (u_1, \ldots, u_n)^T$ is defined uniquely except for the
diagonal entries. This motivates the following definition. We will
call $A$ and $B$ {\it diagonally equivalent} and write $A \deqv B$
if $A-B$ is a diagonal matrix with vanishing trace.

\begin{definition}
Let $S\subseteq  M_n(\Complex)$ and $A\in M_n(\Complex)$ be self-adjoint. We say that $A$ is $S$-positive and denote
$A\ge_S 0$ if  there is self-adjoint $B$ such that $A\deqv B$ and $$\sum_{ij} b_{ij} s_{ij} \ge 0$$
for all $s\in S$.
\end{definition}
The three natural choices for $S$ will be
$$F_n=\set{(t_{ij})| t_{jj} = 1 \text{ and } |t_{ij}|\le 1 \text{ for all } i,j },$$
$P_n \subset F_n$ consisting of positive matrices and the set
$K_n\subset F_n$. Clearly, an self-adjoint matrix  $A=[a_{ij}]$ is
$K_n$-positive iff $f= \sum_i a_{ii} e + \sum_{i\not=j} a_{ij} u_i^*
u_j$ is a trace positive quadratic polynomial.  Note that if $$A
\ge_{F_n} 0$$ then $$\Tr A \ge \sum_{i\not= j} \abs{a_{ij}}$$ and
hence $A\deqv B$ for some diagonally dominant matrix $B$. In this
case polynomial $f = (u_1^{-1}, \ldots, u_n^{-1}) A (u_1, \ldots,
u_n)^T$ is a sum of hermitian squares. However if $A\ge_{P_n} 0$
then  $ A $ need not be diagonally equivalent to positive matrix.
Note that for the three choices of $S$ mentioned above one can use
equality instead of diagonal equivalence since diagonal entries of
elements in $S$ equal to $1$.

The following lemma gives a description of cyclically equivalent
quadratic polynomials.

\begin{lemma} For every matrix $A$ the element
$ (u_1^{-1}, \ldots, u_n^{-1}) A (u_1, \ldots, u_n)^T $ is cyclically equivalent to

\begin{equation}\label{eqviv}
\sum_k g_k^{-1} (u_1^{-1}, \ldots, u_n^{-1}) A_{g_k} (u_1, \ldots, u_n)^T g_k
\end{equation} for any finite
collection $g_1, \ldots, g_k \in \Fg_\infty $ and any matrices $A_g$
such that
\begin{equation}\label{sum}
\sum_k A_{g_k} \deqv A.
\end{equation}

Any element $g\in \F$ such that  $g\ceqv f$ is of the form
\eqref{eqviv} for some matrices satisfying \eqref{sum}. Moreover for
self-adjoint $g$ matrices $A_g$ can also be chosen to be
self-adjoint.
\end{lemma}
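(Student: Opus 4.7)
The plan is to exhibit, for any $g \in \F$ with $g \ceqv f := (u_1^{-1},\dots,u_n^{-1}) A (u_1,\dots,u_n)^T$, a sum of the form \eqref{eqviv} whose matrices $A_{g_k}$ satisfy \eqref{sum} and which is cyclically equivalent to $g$. The main tool is the identification of $\F/K$ with the vector space freely spanned by the conjugacy classes of $\Fg_\infty$.

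First, I would note that the identity $ab-ba = ab - b(ab)b^{-1}$ shows $K$ is spanned by $\{w - hwh^{-1} \mid w,h\in\Fg_\infty\}$, so $\F/K$ has a basis indexed by the conjugacy classes $[w]$ of $\Fg_\infty$, with the image of $\sum_w c_w w$ equal to $\sum_{[w]}\bigl(\sum_{w'\in[w]} c_{w'}\bigr)[w]$. Each $u_i^{-1} u_j$ ($i \neq j$) is cyclically reduced with only two cyclic rotations $u_i^{-1} u_j$ and $u_j u_i^{-1}$, from which the ordered pair $(i,j)$ can be recovered, so the classes $[u_i^{-1} u_j]$ are pairwise distinct and distinct from $[e]$.

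Next, given $g \ceqv f$, I would use the conjugacy-class decomposition to write
\[
g = \gamma\, e + \sum_{i \ne j}\sum_{s} \beta_{s,ij}\, h_{s,ij}^{-1} u_i^{-1} u_j\, h_{s,ij} + r,
\]
where $s$ ranges over a finite index set for each pair $(i,j)$ and $r$ collects the components of $g$ supported on the remaining conjugacy classes. Matching images in $\F/K$ with $f$ forces $\gamma = \tr(A)$, $\sum_s \beta_{s,ij} = a_{ij}$ for every $i \ne j$, and zero total coefficient within each class contributing to $r$. The last condition implies $r \in K$ (any finite combination on a single conjugacy class with vanishing total coefficient is a sum of differences $w' - w$ with $w'$ conjugate to $w$, each of which is a commutator), so $g \ceqv \tilde g := g - r$. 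To express $\tilde g$ in the form \eqref{eqviv}, assign one term $(g_k, A_{g_k}) = (h_{s,ij},\, \beta_{s,ij} E_{ij})$ per triple $(s,i,j)$, together with one additional term $(e, \tfrac{\gamma}{n} I)$ producing $\gamma e$ via $(u_1^{-1},\dots,u_n^{-1}) I (u_1,\dots,u_n)^T = n\,e$. The sum $\sum_k A_{g_k}$ then has off-diagonal $(i,j)$ entry $\sum_s \beta_{s,ij} = a_{ij}$ and constant diagonal $\gamma/n$ of trace $\tr(A)$, so $\sum_k A_{g_k} - A$ is diagonal and traceless, i.e. \eqref{sum} holds.

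For the self-adjoint addendum: if $g = g^*$ then $A = A^*$ and $\tilde g = \tilde g^*$. Applying the involution of $\F$ to the representation above, and using $g_k^* = g_k^{-1}$ for $g_k \in \Fg_\infty$, yields $\tilde g^* = \sum_k g_k^{-1}(u_1^{-1},\dots,u_n^{-1}) A_{g_k}^{*} (u_1,\dots,u_n)^T g_k$. Averaging then gives a representation of $\tilde g$ with the self-adjoint matrices $\tfrac12(A_{g_k} + A_{g_k}^{*})$, whose sum is $\tfrac12(B + B^{*})$ where $B = \sum_k A_{g_k} \deqv A$; since $A = A^{*}$, this still satisfies \eqref{sum}. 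The one delicate point I anticipate is the conjugacy-class bookkeeping establishing $r \in K$, but it follows cleanly from the isomorphism between $\F/K$ and the vector space freely spanned by conjugacy classes, together with the uniqueness of cyclically reduced representatives in the free group.
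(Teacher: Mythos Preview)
Your argument rests on exactly the observation the paper records: $K$ is spanned by $\{w - hwh^{-1} : w,h \in \Fg_\infty\}$, so that $\F/K$ is the free vector space on conjugacy classes of $\Fg_\infty$. The paper's proof consists of that one sentence; your write-up is a careful elaboration of the same idea, and your handling of the self-adjoint addendum by averaging the representation with its adjoint is the natural move.

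One point worth flagging: the converse clause, read literally (``any $g\ceqv f$ \emph{is} of the form \eqref{eqviv}''), is actually too strong, and your proof correctly stops short of it --- you obtain $\tilde g$ of the required shape with $g\ceqv\tilde g$, not $g=\tilde g$. Indeed every monomial appearing in an expression \eqref{eqviv} is a conjugate of some $u_i^{-1}u_j$ or of $e$, so an element such as $f+(u_1u_2u_1-u_2u_1^2)\ceqv f$ can never literally be put in that form; your remainder $r$ is precisely this obstruction. The paper's two-line proof is equally silent here, and the intended content is presumably the bi-implication ``\eqref{eqviv} $\ceqv f \iff$ \eqref{sum}'', which your conjugacy-class bookkeeping (distinctness of the classes $[u_i^{-1}u_j]$ and matching of total coefficients) already yields.
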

\begin{proof}
The lemma follows from the following easy observation. For any $w_1$
and $w_2$ in $\Fg_\infty$ the element $w_1-w_2$ is a commutator
$ab-ba$ for some $a$, $b\in \Fg_\infty$ if and only if  $w_1$ and
$w_2$ are conjugated.  Hence $K$ consists of finitely supported sums
of the form $$\sum_j \sum_k \alpha_{jk} g_k^{-1} w_j g_k$$ where
$w_j$, $g_k$ belong to $\Fg_\infty$ and $\sum_k \alpha_{jk} =0 $ for
all $j$.
\end{proof}

\section{The Clifford Algebras and positive polynomials with real coefficients.}
For a real Hilbert space $V$ there is a unique associative algebra
$\mathcal{C}(V)$ with a linear embedding $J: V\to \mathcal{C}(V)$
with generating range and such that for all $x, y\in V$
\begin{equation}
J(x) J(y) +J(y) J(x) = 2 \seq{x,y}.
\end{equation}

The algebra $\mathcal{C}(V)$ is called Clifford algebras associated
to $V$. Clifford algebra can be realized on a Hilbert space such
that for every $x \in V$ with $\norm{x} = 1$ operator $J(x)$ is
symmetry, i.e.  $J(x)^* = J(x)$ and $J(x)^2=I$. To see this consider
Pauli matrices $$U = \left(
           \begin{array}{cc}
             1 & 0 \\
             0 & -1 \\
           \end{array}
         \right), Q \left(
           \begin{array}{cc}
             0 & 1 \\
             1 & 0 \\
           \end{array}
         \right).
$$ Clearly $U$ and $Q$ are self-adjoint unitary matrices and
 $U^2=I, Q^2 =I$, $Q U +U Q =0$. Then matrices
 $Q_j = U\otimes \ldots \otimes U \otimes Q\otimes I \otimes I \ldots
 $ are symmetries and $\set{Q_i, Q_j} = 2 \delta_{ij} I$. Hence operator
 $J(x)= \sum_j x_j Q_j$ is also a symmetry for unit real vector $x$.
 For further properties of Clifford algebras we refer to the books \cite{BR} and \cite{pisier-book}.

 \begin{theorem}
 For every real correlation matrix $P \in M_n(\Real)$ there is
 $n$-tuple of symmetries $S_1, \ldots, S_n$ in finite dimensional real Hilbert space
   s.t. $P = [ \tr(S_i^* S_j) ]_{ij}$.
 \end{theorem}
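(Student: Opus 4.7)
The plan is to produce the symmetries directly from a Gram factorization of $P$ via the Clifford construction described just above the theorem. Concretely, since $P$ is a real correlation matrix (symmetric, positive semidefinite, with $1$'s on the diagonal), I would first write $P = G^T G$ for some real matrix $G$; the columns $v_1, \ldots, v_n$ of $G$ then satisfy $\langle v_i, v_j \rangle = p_{ij}$, and in particular $\|v_i\| = 1$ because $p_{ii}=1$.

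Next I would invoke the Clifford representation constructed in the preceding paragraph: take the matrices $Q_1, \ldots, Q_n$ obtained from tensor products of the Pauli matrices $U$ and $Q$, so that the $Q_k$ are self-adjoint, square to the identity, and anticommute: $Q_i Q_j + Q_j Q_i = 2\delta_{ij} I$. For each $i$ set
\begin{equation*}
S_i \;=\; J(v_i) \;=\; \sum_{k=1}^{n} (v_i)_k \, Q_k .
\end{equation*}
Because the $v_i$ are real unit vectors, the computation given in the excerpt shows $S_i^* = S_i$ and $S_i^2 = I$, so each $S_i$ is a symmetry on a finite dimensional real Hilbert space.

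Finally I would compute the Gram matrix of traces. From the defining Clifford relation applied to $v_i$ and $v_j$,
\begin{equation*}
S_i S_j + S_j S_i \;=\; 2 \langle v_i, v_j \rangle \, I .
\end{equation*}
Taking the normalized trace and using $\tr(S_i S_j) = \tr(S_j S_i)$ and $\tr(I) = 1$ yields $\tr(S_i S_j) = \langle v_i, v_j \rangle = p_{ij}$. Since $S_i^* = S_i$, this gives $\tr(S_i^* S_j) = p_{ij}$, as required.

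There is essentially no serious obstacle: the only thing to be careful about is to use the same normalization of trace throughout (so that $\tr(I) = 1$ on whichever matrix algebra carries the Clifford representation) and to make sure the Gram factorization $P = G^T G$ produces unit columns, which is immediate from $p_{ii}=1$. The argument, as sketched, gives $S_i$'s in a real Hilbert space of dimension $2^{\lceil n/2 \rceil}$ or so, depending on the concrete realization of the $Q_k$.
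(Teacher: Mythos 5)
Your proposal is correct and follows essentially the same route as the paper: realize $P$ as the Gram matrix of unit vectors and apply the Clifford construction $S_i = J(v_i)$ from the preceding paragraph, with the trace identity following from the anticommutation relation. The paper's proof is just a terser version of your argument, leaving the trace computation implicit.
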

 \begin{proof}
 Every correlation $n\times n$-matrix $P$ is a Gram matrix for a system of unit
 vectors $x_1, \ldots, x_n$, i.e. $P=[\seq{x_i, x_j}]_{ij}$.
Taking  Clifford symmetries $S_j = J(x_j)$ as in the paragraph
preceding the theorem we see that $P = [ \tr(S_i^* S_j)]_{ij}$.
\end{proof}

\begin{proposition}
For every $n\ge 1$ the closure $T_n(\Real)$ of the set of matrices
   $$\set{ [\tau(U_i^* U_j)]_{ij} | U_1, \ldots, U_n \in
   \mathcal{U}(M)}$$ does not depend on real type II$_1$ von Neumann algebra $(M,   \tau)$.

  If self-adjoint $f(u_1, \ldots, u_n) \in \F$ has real coefficients and possess property that
  for every $n$-tuple of unitary matrices $U_1, \ldots, U_n \in U(m)$ we have
   $tr(f(U_1, \ldots, U_n))\ge 0$ then for every
$\varepsilon>0$, $\varepsilon e+ f\ceqv g$ for some $g \in
\set{\sum_{j=1}^{m} g_j^* g_j | m\in \mathbb{N}, g_j\in \Real [
 \Fg_\infty]}$.
\end{proposition}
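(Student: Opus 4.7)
The plan is to use the Clifford symmetry realization from the preceding theorem as a bridge. The ultimate goal is to deduce $\tau(f(U_1,\ldots,U_n))\ge 0$ for all $n$-tuples of unitaries in an arbitrary real type II$_1$ von Neumann algebra $(M,\tau)$; once that is in hand, Lemma \ref{reallemma} produces the desired cyclic equivalence $\varepsilon e + f\ceqv g$ to a sum of hermitian squares with real coefficients. Since the argument relies on the Gram matrix $[\tau(U_i^*U_j)]$, I read the proposition for quadratic $f = \alpha e + \sum_{i\ne j}\alpha_{ij}u_i^*u_j$, in keeping with the theme of the section.

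For the first assertion, I would show that $T_n(\Real)$ coincides with the closed set of real correlation matrices. The inclusion $T_n(\Real)\subseteq\set{\text{real correlation matrices}}$ is automatic: for unitaries $U_j\in\mathcal{U}(M)$ the identity $\sum_{i,j}\overline{v_i}v_j\tau(U_i^*U_j) = \tau((\sum_j v_j U_j)^*(\sum_j v_j U_j))\ge 0$ gives positive semidefiniteness, the diagonal is $\tau(e)=1$, and the entries are real because $(M,\tau)$ is a real von Neumann algebra. Conversely, the preceding theorem realizes every real correlation matrix as $[\tr(S_i^*S_j)]$ with $S_i \in M_k(\Real)$, and $M_k(\Real)$ embeds trace-preservingly into any real II$_1$ factor (via a system of real matrix units, or by first embedding the real hyperfinite factor). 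Since real correlation matrices form a closed set, $T_n(\Real)$ is identified independently of $M$.

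For the second assertion, the Clifford symmetries produced above are in particular complex unitary matrices, so applying the hypothesis to them yields $\alpha + \sum_{i\ne j}\alpha_{ij}p_{ij}\ge 0$ for every real correlation matrix $P=[p_{ij}]$. For arbitrary unitaries $U_1,\ldots,U_n$ in any real type II$_1$ von Neumann algebra $(M,\tau)$, the matrix $[\tau(U_i^*U_j)]$ is a real correlation matrix (as observed above), hence $\tau(f(U_1,\ldots,U_n)) = \alpha + \sum_{i\ne j}\alpha_{ij}\tau(U_i^*U_j)\ge 0$. Lemma \ref{reallemma} then delivers the required $g$ with $\varepsilon e + f\ceqv g$.

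The step I expect to require the most care is the trace-preserving embedding $M_k(\Real)\hookrightarrow M$ used in the reverse direction of the first assertion. In the complex case this is classical through matrix units, and I would adapt that construction to the real setting, or alternatively invoke a real analogue of the fact that every II$_1$ factor contains the hyperfinite one. The other ingredients are a direct Gram-matrix computation and the already-established Lemma \ref{reallemma}.
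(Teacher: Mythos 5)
Your proposal is correct and follows essentially the same route as the paper: identify $T_n(\Real)$ with the real correlation matrices via the Clifford-symmetry theorem together with the trace-preserving embedding of matrix algebras into any II$_1$ algebra, then pass from matrix trace-positivity of the quadratic $f$ to trace-positivity on every real II$_1$ von Neumann algebra and invoke Lemma~\ref{reallemma}. You supply considerably more detail than the paper's two-sentence proof (including the correct observation that the statement must be read for quadratic $f$), but the underlying argument is the same.
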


\begin{proof}
Since every II$_1$ factor contains matrix algebras of arbitrary size
we see that $T_n(\Real)$ coincides with the set of correlation
matrices.  The last statement follows from Lemma~\ref{reallemma}.
\end{proof}

\begin{corollary}
If quadratic  $f\in \F$, $f(u_1, \ldots, u_n)=
  \alpha + \sum_{i\not= j }\alpha_{i j} u_i^* u_j$ is such that
  $$\Tr (f(U_1, \ldots, U_n)) = 0 $$ for all unitary matrices $U_1,\ldots, U_n$
  then $f=0$.
\end{corollary}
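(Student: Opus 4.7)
The plan is to test the identity against the simplest possible unitaries, namely scalars in $U(1)$, and extract the coefficients by linear independence of characters. For any real numbers $\theta_1, \ldots, \theta_n$, the $1 \times 1$ unitaries $U_k = e^{i\theta_k}$ give $U_i^* U_j = e^{i(\theta_j - \theta_i)}$, so the hypothesis specializes to the scalar identity
\[
\Tr\bigl(f(e^{i\theta_1}, \ldots, e^{i\theta_n})\bigr) = \alpha + \sum_{i \ne j} \alpha_{ij}\, e^{i(\theta_j - \theta_i)} = 0,
\]
valid for every $(\theta_1, \ldots, \theta_n) \in \mathbb{R}^n$.

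The second step is to read this as a trigonometric polynomial on the torus $\mathbb{T}^n$. The functions $1$ and $e^{i(\theta_j - \theta_i)}$ (for $i \ne j$) are characters of $\mathbb{T}^n$ associated under Pontryagin duality to the lattice vectors $0$ and $e_j - e_i \in \mathbb{Z}^n$. These lattice vectors are pairwise distinct: the support of $e_j - e_i$ determines the unordered pair $\{i,j\}$, and switching the pair flips the sign of the nonzero entries, so no two coincide, and none equals $0$. Since distinct characters of a compact abelian group are linearly independent as continuous functions, every coefficient must vanish, i.e.\ $\alpha = 0$ and $\alpha_{ij} = 0$ for all $i \ne j$. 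The group elements $e$ and $u_i^{-1}u_j$ ($i \ne j$) of $\Fg_\infty$ are pairwise distinct and therefore linearly independent in $\F$, so this forces $f = 0$.

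There is essentially no obstacle here: the corollary is a direct consequence of evaluating at scalar unitaries, which is a much weaker test than the matrix ones suggested by the setup. In particular one does not need the machinery of trace-positive quadratic polynomials, the lemmas on $R^\omega$-embeddability, or the Clifford algebra representation of real correlation matrices — the orthogonality relations on $\mathbb{T}^n$ already separate every monomial $u_i^* u_j$ from every other and from the unit.
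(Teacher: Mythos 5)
Your proof is correct, but it takes a genuinely different and more elementary route than the paper's. The paper proves the corollary by invoking the Clifford-algebra theorem of the same section: it realizes the real correlation matrices $P_1 = I + t(E_{kj}+E_{jk})$ as $[\tr(U_t^*U_s)]_{ts}$ via symmetries, then twists one unitary by a phase ($V_j = iU_j$) to produce $P_2 = I + it(E_{kj}-E_{jk})$, and extracts $\alpha = \alpha_{kj} = \alpha_{jk} = 0$ from the two resulting linear identities in $t$. You instead specialize to $1\times 1$ unitaries $U_k = e^{i\theta_k}$ and observe that the hypothesis becomes the vanishing of a trigonometric polynomial on $\mathbb{T}^n$ whose frequencies $0$ and $e_j - e_i$ ($i\ne j$) are pairwise distinct, so linear independence of characters kills every coefficient; the final passage from vanishing coefficients to $f=0$ in $\F$ is justified exactly as you say, since $e$ and the reduced words $u_i^{-1}u_j$ are distinct group elements. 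Your argument needs no machinery beyond orthogonality on the torus and in fact establishes a formally stronger statement: vanishing of the trace on all $n$-tuples of \emph{commuting scalar} unitaries already forces $f=0$. What the paper's approach buys is coherence with the theme of Section 4 --- it exercises the description of the achievable correlation matrices $[\tr(U_i^*U_j)]_{ij}$, which is the real object of study there --- but for this particular corollary it is heavier than necessary.
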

\begin{proof}
For every $k\not= j$ and $t\in [0,1]$ the matrix $P_1 = I +
(E_{kj}+E_{jk}) t$ is a real correlation matrix. Hence by the theorem there
are unitary matrices $U_1, \ldots , U_n$ such that  $P_1 =
[\tr(U_t^*U_s)]_{ts}$. Then the matrix $P_2 = I + (i E_{kj}- i
E_{jk})t$ is equal to $[\tr(V_t^*V_s)]_{ts}$ where $V_t = U_t$ for
$t\not=j$ and $V_j = i U_j$ are  unitary matrices. 
Hence  $\alpha +(\alpha_{kj}+
\alpha_{jk})t=0$ and $\alpha + (\alpha_{kj}-  \alpha_{jk}) i t=0$.
From which  follows that $\alpha =\alpha_{kj}=0$ and hence $f=0$.
\end{proof}

%%
%%%Note that by modifying a Kirchberg argument we see that every von
%%Neumann algebra $M$ with separable predual and faithful normal tracial state $\tau$ embedds into $R^\omega$ iff for every $m\in \mathbb{N}$ the set of real symmetric matrices
%%%$\set{[\tau(s_i s_j)]_{ij} | s_1=I \text{ and } s_2,
%%%\ldots, s_m \text{ are symmetries in }  M }$  does not depend on $M$.

\end{document}